\documentclass[12pt]{amsart}
\usepackage[margin=1.15in]{geometry}
\usepackage{enumerate}
\usepackage{latexsym}

\newcommand{\supp}{Supp}

\numberwithin{equation}{section}
\newtheorem{theorem}{Theorem}[section]
\newtheorem{corollary}[theorem]{Corollary}

\newtheorem{lemma}[theorem]{Lemma}

\newtheorem{remark}[theorem]{Remark}

\newtheorem{definition}[theorem]{Definition}

\usepackage{todonotes}

\begin{document}

\author[V. Asatryan]{Vahram Asatryan}
\address{V. Asatryan, Faculty of Mathematics and Mechanics, Yerevan State University}
\email{vm.asatryan@gmail.com}

\author[E. Babasyan]{Erik Babasyan}
\address{E. Babasyan, Faculty of Mathematics and Mechanics, Yerevan State University}
\email{erik.babasyan.02@gmail.com}

\author[S. Mkrtchyan]{Sevak Mkrtchyan}
\address{S. Mkrtchyan, Department of Mathematics, 
University of Rochester,
500 Joseph C. Wilson Blvd., Rochester, NY 14627}
\email{sevak.mkrtchyan@rochester.edu}

\title[Moment conditions for the asymptotic completeness of iid gap sequences]{Moment conditions for the asymptotic completeness of iid gap sequences}


\begin{abstract}
We study conditions under which integer sequences with independent, identically distributed gaps are asymptotically $k$-complete, meaning that every sufficiently large integer can be represented as the sum of exactly $k$ distinct elements of the sequence, or equivalently whether $k$-fold sumsets with distinct entries from such sequences generate all sufficiently large integers. Prior results established asymptotic completeness under strong conditions on the gap distribution involving the moment generating function. Leveraging renewal theory our main result shows that asymptotic $2$-completeness holds almost surely under the much weaker assumption of a finite second moment. Furthermore, using Schnirelmann densities and Mann's theorem we show weak asymptotic $k$-completeness under only a finite first moment condition, albeit with an upper bound on the first moment.

\noindent
{\it Keywords: Asymptotic completeness; renewal processes; sumsets; additive combinatorics; additive number theory.}

\noindent
2010 {\it Mathematics Subject Classification:} Primary: 60C05. Secondary: 60K05, 11B30, 11P70.

\end{abstract}

\maketitle

\setcounter{tocdepth}{1}
\tableofcontents
\setcounter{tocdepth}{3}

\section{Introduction}

We study asymptotic completeness properties of random sequences with independent, identically distributed gaps. Completeness is defined slightly differently by various authors, so to facilitate the discussion, let us define precisely the various notions of completeness we will study.

\begin{definition}
    Let $\bar{w}=\{w_n\}_{n\geq 1}$ be an increasing sequence of positive integers. We say that $\bar{w}$ is complete, if every positive integer can be written as the sum of distinct elements of $\bar{w}$.
\end{definition}
There are rather simple necessary and sufficient conditions for a sequence to be complete, namely that $a_1=1$ and $a_k\leq 1+\sum_{i=1}^{k-1}a_i$ for $k\geq 2$ (see \cite{Brown61}). If we loosen the requirement that every integer be represented to every large enough integer be represented, that is if we move from completeness to asymptotic completeness, the question becomes much more subtle and no such simple criterion exists.

\begin{definition}
    Let $\bar{w}=\{w_n\}_{n\geq 1}$ be an increasing sequence of positive integers and $k$ a positive integer. We say that $\bar{w}$ is $k$-complete, if every positive integer can be written as the sum of exactly $k$ distinct elements of $\bar{w}$. Replacing $k$ by $\leq k$ will mean at most $k$ distinct elements can be used. Adding the adjective \textit{weakly} means the condition of distinctness is dropped, and adding the descriptor \textit{asymptotic} means instead of every integer, every large enough integer needs to be represented.
\end{definition}
For example, $\bar{w}$ is weakly asymptotically $\leq k$-complete, if every large enough integer can be written as the sum of at most $k$, not necessarily distinct, elements of $\bar{w}$. 

Note that the notion of weak completeness can be phrased in terms of sumsets as well. If $\bar{w}$ is regarded as a set, then, for example, $\bar{w}$ is weakly asymptotically $3$-complete, if the sumset $\bar{w}+\bar{w}+\bar{w}$ contains every large enough integer, where the sumset of two sets $A$ and $B$ is defined to be $A+B=\{a+b:a\in A, b\in B\}$. The concept is also related to the notion of bases (see \cite{TaoVuAdditiveC} for a definition, and many results on completeness). 

Questions of completeness of integer sequences arise in various contexts, and have been studied extensively in additive combinatorics and number theory. The most celebrated such question is the Goldbach conjecture from number theory, stating that every even number other than $2$ can be written as the sum of two primes, or the weak conjecture (now proven by Harald Helfgott \cite{helfgott2014ternarygoldbachconjecturetrue}), stating that every odd integer greater than $5$ can be written as the sum of three primes. 

In this paper we study completeness properties of random sequences. From a number theoretic point of view one such sequence of interest would be Cramer's probabilistic model of primes (\cite{Cramer1936}), where each integer $n$ is declared a ``prime'' independently, with probability $\frac{1}{\log n}$, resulting in a random sequence mimicking the density of primes. However, the fact that the entries of the sequence are chosen independently makes studying completeness rather simple. For an interesting read on the limitations of Cramer-type independent models of primes for predicting properties of the actual primes see \cite{Pintz07}.

In \cite{Cilleruelo2016} the authors studied completeness properties of pseudo $s$-th powers introduced by Erd\"{o}s and R\'{e}nyi \cite{Erdos1960}, where, similarly to Cramer's model, each integer $n$ is independently declared an ``$s$-th power'' with probability $\frac 1s n^{-1+1/s}$, to get the correct density of $s$'th powers of integers. This is related to Waring's problem, which states that for any $s$, there is a positive integer $r(s)$, such that the $s$'th powers are $\leq r(s)$ complete. Because of the built in independence again, in \cite{Cilleruelo2016} they were able to obtain quite robust results regarding completeness and observed some threshold phenomenon occur. For proofs of Waring's problem see \cite{Hilbert1909} for the original proof by Hilbert and \cite{Vinogradov1928} for a simplified one using the circle method.

Instead of selecting the elements of $\bar{w}$ independently, we study the completeness of increasing sequence where instead the gaps between elements are independent random variables. More precisely, we let $\bar{X}=\{X_i\}_{i=1}^\infty$ be an i.i.d. sequence of positive, integer valued random variables, and consider the sequence $\bar{W}=\{W_n\}_{n=1}^\infty$ with $W_n=X_1+\dots+X_n$. Such sequences were studied in \cite{BrMkPak22}, where part of the motivation was the study of the homology of certain simplicial complexes called quota or threshold complexes with i.i.d. weights. Following \cite{BrMkPak22}, we will refer to the elements $W_n$ of $\bar{W}$ as weights, and $X_n$ as gaps. Studying completeness properties of such sequences is more challenging than the ones described above, even though the density here is much higher, because the weights $W_n$ are now highly correlated. 

Note, that if the support $\supp{X}$ of the gap distribution has a greatest common divisor $d>1$, then every integer written as a sum of weights from $\bar{W}$ will be a multiple of $d$, so we will certainly not have any asymptotic completeness. From now on we will make the assumption that $\gcd\{\supp X\}=1$.

Using sieving ideas it was shown in \cite{BrMkPak22} that under a rather strong condition on the gap distribution, involving a bound on the radius of convergence of the moment generating function, a weight sequence generated by i.i.d. gaps is asymptotically $k$-complete with probability $1$ for every $k\geq 2$. 

Our main result is to show that asymptotic $2$-completeness holds almost surely under the much weaker second moment condition, which is a significant improvement over the moment-generating-function condition from \cite{BrMkPak22}.

\begin{theorem}
\label{thm: main2Complete}
Let $\bar{W}=\{W_1,W_2,\dots\}$ be a sequence of positive, integer-valued random variables (``weights'') such that the gaps $\{W_{n}-W_{n-1}\}_{n\in\mathbb{N}}$ are independent, identically distributed, positive, integer-valued random variables. If the distribution of gaps has a finite second moment, then the sequence of weights is asymptotically $2$-complete.
\end{theorem}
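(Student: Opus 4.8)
The plan is to realize $\bar{W}$ as a renewal process with i.i.d.\ positive integer gaps $X_i$, mean $\mu=\E[X_1]<\infty$, and to reduce asymptotic $2$-completeness to the statement that, almost surely, the number of admissible representations
\[
Z_N=\#\{(i,j): i<j,\ W_i+W_j=N\}
\]
is positive for all large $N$. (Representations with $i=j$ force $N=2W_i$ and occur for at most one index, so dropping distinctness is harmless asymptotically.) The basic quantitative input is the lattice local renewal theorem of Erd\"{o}s--Feller--Pollard: since $\gcd(\supp X_1)=1$ and $\mu<\infty$, the renewal mass $u_m:=\sum_{n\ge1}P(W_n=m)=P(m\in\bar{W})$ satisfies $u_m\to 1/\mu$. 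Writing $W_j=W_i+(X_{i+1}+\dots+X_j)$ and using that $W_i$ and the increment are independent, I would compute $\E[Z_N]=\sum_{a}u_a\,u_{N-2a}$ (the sum over $a$ with $a\ge1$, $N-2a\ge1$), which by the renewal theorem is asymptotic to $(N/2)\mu^{-2}$ and in particular tends to infinity. Thus on average every large $N$ has order $N$ representations; the content is to turn this first-moment fact into an almost-sure, for-all-large-$N$ statement.

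The mechanism I would exploit is the regeneration (strong Markov) structure of the renewal process, which supplies genuine independence. Fix $a\in(0,1/2)$ and split at a point near $aN$: the weights in a left window around $aN$ are measurable with respect to the gaps up to that point, whereas whether a prescribed far target is a weight is governed by the post-regeneration process, a fresh renewal process $\bar{W}'$ independent of the past. Conditioning on the left configuration $S=\bar{W}\cap[\cdot]$ (a fixed set of points whose mirror targets $\{N-W_i:W_i\in S\}$ are all of size $\ge cN$), the count of good representations becomes a sum $Y=\sum_{t}\mathbf{1}[t\in\bar{W}']$ of renewal indicators of the independent future process over the target set. Its conditional mean is $\asymp |S|/\mu\asymp N$, and the crux is to control the lower tail $P(Y=0)$. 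Here the finite \emph{second} moment enters decisively: it is the condition that yields summable control of the local renewal error, $\sum_m|u_m-1/\mu|<\infty$, so the indicators $\mathbf{1}[t\in\bar{W}']$ have summable correlations. This lets me bound the variance (and, with more care, the higher moments) of $Y$ by its mean and, combining this with the fact that the target set has positive density so that $Y=0$ forces two independent density-$\mu^{-1}$ sets to be disjoint over a window of length $\asymp N$, aim to push $P(Z_N=0)$ down to a summable rate.

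Finally I would upgrade to the almost-sure conclusion by Borel--Cantelli, conveniently organized over dyadic blocks $N\in[2^k,2^{k+1})$. \textbf{The main obstacle is precisely this summability.} The naive second-moment estimate gives only $P(Z_N=0)=O(1/N)$, which is not summable; and because the gaps possess only a second (polynomial) moment, the overshoot chain is not geometrically ergodic, so one cannot simply manufacture exponentially many independent trials to force exponential decay. The heart of the argument is therefore to extract a genuinely summable bound, say $P(Z_N=0)=O(N^{-1-\delta})$, by marrying the summability of $u_m-1/\mu$ (the quantitative avatar of the finite second moment, which tames the large-gap contributions that would otherwise open long holes in the renewal set) with the positive density of the target set, rather than relying on counting alone.
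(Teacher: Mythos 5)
Your reduction to the representation count $Z_N$ and your first/second moment computations are sound (and the equivalence of a finite second moment with $\sum_m|u_m-1/\mu|<\infty$ is indeed classical), but the proposal stalls exactly where you admit it does: the lower-tail bound. The second-moment method can never give better than $P(Z_N=0)\le \mathrm{Var}(Z_N)/(\E Z_N)^2 = O(1/N)$, which is not summable, and the promised upgrade to $O(N^{-1-\delta})$ is not an argument but an aspiration --- you name the two ingredients you hope to combine (summability of $u_m-1/\mu$ and positive density of the target set) without exhibiting any mechanism that produces an extra factor $N^{-\delta}$. It is moreover unclear that a pointwise polynomial rate with exponent exceeding $1$ is available at all under a bare second-moment hypothesis: square-integrable but heavy gap tails rule out geometric ergodicity of the overshoot chain, as you note, and they equally obstruct the standard routes (higher moments of $Z_N$, exponential concentration over many regeneration blocks) to anything beyond Chebyshev. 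So, as written, the proof has a hole at its central step.

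It is instructive to see how the paper's proof avoids ever needing a pointwise rate. It bounds the failure event $A_n$ by the event that the renewal process $\{W_i\}$ and the time-reversed process launched from $n$ --- which, after a pseudo-independence lemma, may be replaced by an independent copy $\{b+U_i\}$ delayed by the overshoot $b=n-W_{T_n}$ --- fail to meet in $[1,n/3]$; this gives $P(A_n)\le \sum_{b} P(X\ge b+1)\,P(K_{0,b}>n/3)+(\E X+1)P(L_n^c)$, where $K_{0,b}$ is the first meeting point of the two delayed independent renewal processes. The key move is then to sum over $n$ \emph{first}, at fixed delay $b$: one has $\sum_n P(K_{0,b}>n/3)\le 3\,\E K_{0,b}\le 3Cb$ by a coupling estimate of Lindvall, and only afterwards does one sum over $b$, where $\sum_b b\,P(X\ge b+1)\le C_1\E X^2$ is exactly what the second moment buys (the error term $\sum_n P(L_n^c)$ is also controlled by $\E X^2$). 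Thus $\sum_n P(A_n)<\infty$ is obtained as a single interchanged double sum --- no individual $P(A_n)$ is ever shown to decay at any particular rate --- and Borel--Cantelli finishes. If you want to salvage your plan, the lesson is to replace the quest for a pointwise rate by an identity of the form $\sum_n P(\text{no meeting by } n) = \E(\text{meeting time})$, so that the second moment enters through the weighting over the delay $b$ rather than through decay in $n$.
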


Note, that, as one might expect, almost sure $k$-completeness for such sequences implies almost sure $k+1$ completeness as well (see \cite{BrMkPak22} for the simple argument), so we immediately have that under the finite-second-moment condition on the gaps i.i.d. gap sequences are also asymptotically $k$-complete for all $k\geq 2$.

To prove the theorem, we reduce the question to a question regarding intersection points of independent, delayed renewal processes. See for example \cite{KenBer-EJP17} and \cite{Lind86} for results on such processes.

The ideas from renewal theory do not allow us to weaken the finite-second-moment condition. Using the concept of Schnirelmann density and Mann's theorem \cite{Mann42}, we also obtain a weak $k$-completeness result under only a first moment condition, albeit under a rather restrictive upper bound on the first moment.

\begin{theorem}
    \label{thm:main_1st_moment_k_complete}
    Let $\bar{W}=\{W_1,W_2,\dots\}$ be a sequence of positive, integer-valued random variables such that the gaps $\{W_{n}-W_{n-1}\}_{n\in\mathbb{N}}$ are independent, identically distributed, positive, integer-valued random variables. If the distribution of gaps has a finite first moment less than $k$, then the sequence of weights is weakly asymptotically $k$-complete.
\end{theorem}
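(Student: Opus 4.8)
The plan is to combine the strong law of large numbers, which pins down the density of the weights, with a reduction of a $k$-fold weight sum to a $k$-fold sumset of a single renewal set started at $0$, to which Mann's theorem can then be applied. First I would use the strong law: since the gaps are i.i.d.\ with finite mean $\mu=\E[X]<k$, almost surely $W_n/n\to\mu$, and inverting this (the elementary renewal theorem) the counting function $N(t)=\#\{n:W_n\le t\}$ satisfies $N(t)/t\to 1/\mu$. Hence, almost surely, the weight set $\bar W$ has lower asymptotic density $1/\mu>1/k$.

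Next I would reduce the statement about sums of exactly $k$ weights to a statement about a sumset being cofinite. Writing each weight as $W_n=W_1+(W_n-W_1)$ and setting $A=\{W_n-W_1:n\ge1\}=\{0,X_2,X_2+X_3,\dots\}$, one has the identity $\underbrace{\bar W+\cdots+\bar W}_{k}=kW_1+(\underbrace{A+\cdots+A}_{k})$, where the sumsets allow repetition. The set $A$ is itself a renewal sequence started at $0$, so $0\in A$ and $A$ inherits the lower density $1/\mu$; crucially, because $0\in A$, sums of \emph{at most} $k$ elements of $A$ are automatically sums of \emph{exactly} $k$ (pad with zeros), and these correspond precisely to sums of exactly $k$ weights. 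Thus it suffices to prove that $kA$ (the $k$-fold sumset of $A$) contains every sufficiently large integer.

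To bring in Mann's theorem I must pass from lower asymptotic density to Schnirelmann density. Fixing $\beta$ with $1/k<\beta<1/\mu$, the density bound supplies an $n_0$ with $A(n)\ge\beta n$ for all $n\ge n_0$, so the padded set $\tilde A=A\cup\{0,1,\dots,n_0\}$ has Schnirelmann density $\sigma(\tilde A)\ge\beta>1/k$. Iterating Mann's theorem then gives $\sigma(k\tilde A)\ge\min(1,k\beta)=1$, whence $k\tilde A=\mathbb N_0$.

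The hard part will be to remove the finite padding $\{1,\dots,n_0\}$ and conclude about $kA$ itself. Directly, $k\tilde A=\mathbb N_0$ only yields that every integer lies within $kn_0$ of an element of $kA$, i.e.\ that $kA$ is syndetic, which is weaker than the cofiniteness we need. To upgrade syndeticity to cofiniteness I would exploit the two features not yet used: the standing hypothesis $\gcd\{\supp X\}=1$, which rules out any residue obstruction, and the genuine renewal structure of $A$ (fresh i.i.d.\ increments), which should make each bounded residual shift $f\in[0,kn_0]$ absorbable into an existing representation $M-f\in kA$ for all large $M$. Making this absorption uniform over all large $M$ — equivalently, verifying that the period appearing in the density analogue of Mann's inequality is trivial — is where the real work lies, and is the step I expect to be the main obstacle.
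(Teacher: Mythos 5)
Your first two steps are sound and in fact parallel the paper's own reductions: the density bound from the strong law is where the paper also starts, and your observation that subtracting $W_1$ from every weight adjoins $0$ and converts sums of exactly $k$ weights into sums of at most $k$ elements of $A$ is precisely the paper's Lemma \ref{lem:nonprob_leq_equiv_equal}. But the step you yourself flag as ``where the real work lies'' is a genuine gap, and it is not a technicality that routine effort will fill: the data you have at that point --- lower asymptotic density $>1/k$, $0\in A$, $\gcd = 1$ --- is provably insufficient to conclude that the $k$-fold sumset is cofinite. Concretely, for $k=3$ take $A=\{n\ge 0:\ n\equiv 0,1,2,3 \pmod{11}\}$: then $0\in A$, $\gcd(A)=1$, and $d(A)=4/11>1/3$, yet $A+A+A$ misses the entire residue class $10 \pmod{11}$, hence is not even asymptotically cofinite. (This does not contradict Mann because the Schnirelmann density of this $A$ is $3/10<1/3$; it is exactly the gap between asymptotic and Schnirelmann density that your padding trick cannot bridge.) So any completion of your argument must inject probabilistic structure of the renewal set beyond its density, as you suspect, and the ``absorption'' of the bounded defect $f\in[0,kn_0]$ cannot be made uniform by density and gcd considerations alone. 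Only in the case $k=2$ does your route close without further input, since density $>1/2$ gives $n\in A+A$ for all large $n$ by pigeonhole.

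The paper circumvents this obstruction by never padding at all, using randomness in two places where your argument is deterministic. First, by Lemma \ref{lem:tail_event}, Lemma \ref{lem:C=D} and Kolmogorov's $0$--$1$ law, weak asymptotic $k$-completeness is a tail event whose probability equals that of weak asymptotic $\le k$-completeness, so it suffices to show the latter holds with \emph{positive} probability. This is the move your realization-by-realization framework lacks: once only positive probability is needed, one can condition on a favorable initial segment instead of artificially adjoining $\{0,1,\dots,n_0\}$. With positive probability the first $n_0$ gaps all equal $1$ (when $1\in\supp X$; when $1\notin\supp X$ a separate construction using the element $x'$ whose predecessor lies in the monoid generated by $\supp X$ is used --- this is where $\gcd\{\supp X\}=1$ actually enters, rather than through any Kneser-type period analysis) \emph{and}, independently, the weight sequence generated by the remaining gaps stays above density $1/k$ from index $n_0$ onward. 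On that event the genuine Schnirelmann density of the actual weight set is $\ge 1/k$, Mann's theorem applies directly to it, and one obtains $\le k$-completeness outright; no syndetic-to-cofinite upgrade is ever required.
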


In the case $k=2$ we strengthen the result from weak $2$-completeness to $2$-completeness.

\begin{corollary}
    \label{cor:main_1st_moment_2_complete}
    Let $\bar{W}=\{W_1,W_2,\dots\}$ be a sequence of positive, integer-valued random variables such that the gaps $\{W_{n}-W_{n-1}\}_{n\in\mathbb{N}}$ are independent, identically distributed, positive integer-valued random variables. If the distribution of gaps has a finite first moment less than $2$, then the sequence of weights is asymptotically $2$-complete.
\end{corollary}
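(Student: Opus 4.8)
The plan is to work directly with the density of the weight set rather than only with the weak representation. Write $A=\{W_n\}_{n\ge 1}$ for the (random) set of weights and let $N(m)=\#\{n\ge 1: W_n\le m\}$ be its renewal counting function. The key elementary observation is that an integer $n$ admits a representation as a sum of two \emph{distinct} weights if and only if there is a weight $w$ with $w<n/2$ and $n-w\in A$: indeed such $w$ and $n-w$ are distinct weights summing to $n$, and conversely any distinct representation $n=W_i+W_j$ with $W_i<W_j$ has $W_i<n/2$. So it suffices to prove that, almost surely, every sufficiently large $n$ has such a weight $w$.

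I would recast this existence statement as an intersection problem inside the integer interval $I_n=(n/2,n)\cap\mathbb{Z}$. Set $P_n=\{\,n-w: w\in A,\ 1\le w<n/2\,\}$ and $Q_n=A\cap(n/2,n)$; both are subsets of $I_n$, and since $w\mapsto n-w$ is a bijection we have $|P_n|=|A\cap[1,n/2)|$ and $|Q_n|=|A\cap(n/2,n)|$. Any $v\in P_n\cap Q_n$ produces weights $v$ and $n-v$ with $n-v<n/2<v$, hence distinct, summing to $n$. Since $P_n,Q_n\subseteq I_n$, a simple inclusion--exclusion shows $P_n\cap Q_n\ne\emptyset$ as soon as $|P_n|+|Q_n|>|I_n|$, so the whole problem reduces to establishing this overflow inequality for all large $n$.

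The counting input comes from renewal theory. Because the gaps are i.i.d.\ with finite mean $\mu:=\E[X]<2$, the strong law of large numbers gives $W_n/n\to\mu$ almost surely, and the standard sandwiching $W_{N(m)}\le m<W_{N(m)+1}$ then yields $N(m)/m\to 1/\mu$ almost surely. Noting that $|P_n|+|Q_n|=N(n-1)-\mathbf{1}[n/2\in A]\ge N(n-1)-1$ while $|I_n|\le n/2$, on the full--probability event where $N(m)/m\to 1/\mu$ we obtain
\[
|P_n|+|Q_n|-|I_n|\ \ge\ N(n-1)-1-\tfrac n2\ =\ \Big(\tfrac1\mu-\tfrac12\Big)n+o(n).
\]
The leading coefficient is strictly positive precisely because $\mu<2$, so almost surely $|P_n|+|Q_n|>|I_n|$ for all large $n$, which by the first paragraph gives the desired distinct representation and hence asymptotic $2$-completeness.

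I expect the only real obstacle to be bookkeeping rather than conceptual: one must ensure the density estimate holds simultaneously for all large $n$ on a single almost-sure event --- this is exactly what the a.s.\ convergence $N(m)/m\to 1/\mu$ provides --- and one must track the parity of $n$ and the endpoint $n/2$ carefully so that the two weights produced are genuinely distinct and not the coincidental $n=2W_i$. This also clarifies the relationship to Theorem~\ref{thm:main_1st_moment_k_complete}: that theorem already furnishes weak $2$-completeness, and the only integers it might represent solely as $2W_i$ are those for which $n/2\in A$; the overflow count above shows that, since $\mu<2$ forces the reflected weight sets $P_n$ and $Q_n$ to fill more than the interval $(n/2,n)$, no large such integer can fail to have a genuinely distinct representation.
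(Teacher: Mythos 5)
Your proof is correct, and it takes a genuinely different---and more elementary---route than the paper. The paper proves this corollary in two stages: it first invokes Theorem \ref{thm:main_1st_moment_k_complete} (whose proof runs through Schnirelmann density, Mann's theorem, and a tail-event zero--one argument) to get \emph{weak} asymptotic $2$-completeness, and then upgrades weak to distinct by observing that the only possible failures are integers of the form $2W_n$, and showing via Blackwell's renewal theorem plus Borel--Cantelli that almost surely only finitely many $n$ have $2W_n\notin\{W_i+W_j: i\neq j\}$. Your argument bypasses both Mann's theorem and Blackwell's theorem entirely: splitting $[1,n)$ at $n/2$, with the point $n/2$ excluded from both $P_n$ and $Q_n$, handles distinctness for free (any $v\in P_n\cap Q_n$ satisfies $v>n/2>n-v$), and then the renewal strong law $N(m)/m\to 1/\mu$ with $1/\mu>1/2$ forces the overflow $|P_n|+|Q_n|\geq N(n-1)-1>n/2\geq |I_n|$ for all large $n$ on a single almost-sure event; your counting identities and the inclusion--exclusion step all check out. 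What the pigeonhole buys is a short, self-contained proof that does not even need Theorem \ref{thm:main_1st_moment_k_complete}; what it costs is generality---the argument is special to $k=2$, since a density exceeding $1/k$ admits no such direct pigeonhole for $k\geq 3$, which is precisely why the paper routes the general case through Mann's theorem. The paper's two-stage structure also yields a by-product your proof does not: under only $\mathbb{E}X<\infty$, weak asymptotic $2$-completeness and asymptotic $2$-completeness have the same probability.
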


Note, that a much simpler proof of the Waring-Hilbert theorem was obtained by Linnik using the notion of Schnirelmann density \cite{Linnik1943,Jameson2013LINNIKSPO}.

\begin{remark}
A simple application of the Borel-Cantelli lemma shows that if $\mathbb{E}X^{\frac{1}{k}} = \infty$, then $P(X_m \geq m^k \quad i.o.) = 1$, so by the pigeonhole principle, $$P(\{W_n\}_{n\in \mathbb{N}} \text{ is asymptotically $k-$complete}) = 0.$$ In particular, for asymptotic $2$-completeness, at least a finite $1/2$-moment is needed.
\end{remark}

\begin{remark}
    Another approach that can be used to tackle the problem is to utilize the fact that the correlation between weight sequences in far locations decreases as a function of the distance between them. More precisely, given $n$, let $k$ be an integer growing much slower than $n$, let $s_i=\lfloor \frac{n i}{k}\rfloor$ for $0\leq i< k$ and consider the $k$ processes $W^i:=\{W_j:s_i\leq W_j<(s_i+s_{i+1})/2\}$. Conditioned on there not being very large gaps, these processes should be only weakly correlated, since they are far apart. One can try to use this to show asymptotic completeness, but the condition one gets using such an approach is that the gap distribution should have a finite $2+f(\mathbb{E} X)$ moment, where $f$ is a certain increasing function. 
\end{remark}

\subsection*{Outline}
Section \ref{sec:2ndmoment} is devoted to the proof of Theorem \ref{thm: main2Complete}. Section \ref{sec:1stmoment} is devoted to the proofs of Theorem \ref{thm:main_1st_moment_k_complete} and Corollary \ref{cor:main_1st_moment_2_complete}. 

\subsection*{Acknowledgements}
{
We would like to thank Kenneth Alexander for a helpful discussion regarding renewal processes. SM was partly supported by Simons Foundation Grant No. 422190.  For part of this research SM was visiting Yerevan State University through the support of the Fulbright Scholar program, and would like to thank them for their hospitality.
}

\section{Second moment condition for 2-completeness.}
\label{sec:2ndmoment}

In this section we prove asymptotic $2$-completeness under the finite-second-moment condition. 

Let $M$ be a sufficiently large real number and denote $$L_n:=\left\{\max{(X_1, ..., X_n)} \le \frac{n}{M}\right\}.$$ Note, that a simple Borel-Cantelli argument shows that for any fixed $M>0$, $L_n$ holds almost surely for any large enough $n$. For a positive integer $n$, let $T_n$ be the index of the last weight before reaching $k$, i.e. $T_k = \max\{i \in \mathbb{N} | W_i \le k\}$, and for $1\leq i\leq T_n$ define $$R_i = n - W_{T_n} + \sum\limits_{j=0}^{i - 1}X_{T_n - j}.$$
It is easy to see that if $n$ is not the sum of two weights from $\bar{W}$, then
\begin{equation}
    \label{eq:w_int_r_empty}
    \{W_i\}_{1 \le i \le T_n} \cap \{R_i\}_{1 \le i \le T_n} \cap \left[1, \frac{n}{3}\right] = \emptyset .
\end{equation}



Lastly, for each nonnegative integer $b \in [0, n)$, for any $ i \in [1, T_{n - b}]$, define $$W'_{b, i} = X_{T_{n-b}} + ... + X_{T_{n-b} - i + 1}=W_{T_{n-b}}-W_{T_{n-b}-i},$$
which is the sequence of weights you get if you start accumulating gaps moving backwards from the $T_{n-b}$-th gap.

\begin{lemma}
\label{lem:estimate_empty_intersection}
We have
    \begin{multline*}
        P\left( \{W_i\}_{1 \le i \le T_n} \cap \{R_i\}_{1 \le i \le T_n} \cap \left[1, \frac{n}{3}\right] = \emptyset \right) 
        \le (\mathbb{E} X + 1) \cdot P(L_n^c) \\+ \sum\limits_{b = 0}^{\frac{n}{M}}P(X \ge b + 1)P(\{W_i\}_{1 \le i \le T_{n - b}}\cap\left[1, \frac{n}{3}\right] \cap \{b + W'_{b , i}\}_{1 \le i \le T_{n - b}}, L_n).
    \end{multline*}
\end{lemma}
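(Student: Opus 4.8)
The plan is to decompose the event in \eqref{eq:w_int_r_empty} according to the value of $b=n-W_{T_n}$, the amount by which the last weight not exceeding $n$ undershoots $n$, and then to exploit the regeneration structure of the weight sequence at the level $n-b$.

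First I would record the algebraic identity behind the definition of $W'_{b,i}$: on the event $\{n-W_{T_n}=b\}$ one has $T_n=T_{n-b}$ and, for every $i$,
\[
R_i=n-W_{T_n-i}=b+\left(W_{T_{n-b}}-W_{T_{n-b}-i}\right)=b+W'_{b,i}.
\]
Hence on $\{n-W_{T_n}=b\}$ the intersection in \eqref{eq:w_int_r_empty} is empty exactly when the event
\[
E_b:=\left\{\{W_i\}_{1\le i\le T_{n-b}}\cap\{b+W'_{b,i}\}_{1\le i\le T_{n-b}}\cap\left[1,\tfrac n3\right]=\emptyset\right\}
\]
occurs, so that $P(\text{the intersection is empty})=\sum_{b\ge 0}P\!\left(E_b,\ n-W_{T_n}=b\right)$.

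I would then split this sum at $b=n/M$. If $b>n/M$, then $W_{T_n}=n-b$ together with $W_{T_n}\ge T_n$ forces $T_n<n$, and the gap $X_{T_n+1}$ (which must exceed $b$ for $W_{T_n}$ to be the last weight below $n$) is among $X_1,\dots,X_n$ and exceeds $n/M$; thus $\{n-W_{T_n}=b\}\subseteq L_n^c$ and the tail $\sum_{b>n/M}P(E_b,\ n-W_{T_n}=b)$ is at most $P(L_n^c)$. For $b\le n/M$ I would use regeneration: with the stopping time $\rho=\inf\{t:W_t=n-b\}$ one has $\{n-W_{T_n}=b\}=\{\rho<\infty\}\cap\{X_{\rho+1}>b\}$, while on $\{\rho<\infty\}$ the event $E_b$ is $\mathcal F_\rho$-measurable, since it involves only the $W_i$ and the $W'_{b,i}=W_\rho-W_{\rho-i}$ for $i\le\rho=T_{n-b}$. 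Because $n-b$ is hit exactly at $\rho$, the strong Markov property makes $X_{\rho+1}$ a fresh copy of $X$ independent of $\mathcal F_\rho$, whence
\[
P\!\left(E_b,\ n-W_{T_n}=b\right)=P(X\ge b+1)\,P(E_b,\ \rho<\infty).
\]

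Finally I would reassemble: bounding $P(E_b,\rho<\infty)\le P(E_b,L_n)+P(L_n^c)$ and summing over $0\le b\le n/M$ gives $\sum_{b=0}^{n/M}P(X\ge b+1)P(E_b,L_n)+P(L_n^c)\sum_{b\ge 0}P(X\ge b+1)$, and since $\sum_{b\ge 0}P(X\ge b+1)=\sum_{b\ge 0}P(X>b)=\E X$, adding the $P(L_n^c)$ contributed by the range $b>n/M$ produces the coefficient $\E X+1$ and the asserted inequality. The crux is the factorization step, where one must use the exact hitting time $\rho$ of $n-b$ (rather than the first-passage time strictly above $n-b$, at which the straddling gap would be size-biased) so that the subsequent gap is an unbiased fresh copy of $X$ yielding precisely the factor $P(X\ge b+1)$; the remaining steps are bookkeeping, with the finite first moment ensuring that $\sum_b P(X\ge b+1)=\E X<\infty$ keeps the $L_n^c$ coefficient finite.
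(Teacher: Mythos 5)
Your proof is correct and follows essentially the same route as the paper: both decompose over the undershoot $b=n-W_{T_n}$, exploit that on this event the walk sits exactly at level $n-b$ at time $T_n=T_{n-b}$ so that the straddling gap $X_{T_n+1}$ is an independent fresh copy of $X$ (yielding the factor $P(X\ge b+1)$), and absorb all remaining error terms into $P(L_n^c)$. The only differences are organizational — you phrase the factorization via the hitting time $\rho$ and the strong Markov property where the paper sums over the values $u=T_n$ with $\{W_u=n-b\}$, and you handle the range $b>\frac{n}{M}$ by showing $\{n-W_{T_n}=b\}\subseteq L_n^c$ directly rather than conditioning on $L_n$ at the outset — and both versions of the bookkeeping produce the same coefficient $\mathbb{E}X+1$.
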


\begin{proof}
    First, note that 
    \begin{multline}
    \label{eq:cond_on_L_n}
        P\left( \{W_i\}_{1 \le i \le T_n} \cap \{R_i\}_{1 \le i \le T_n} \cap \left[1, \frac{n}{3}\right] = \emptyset \right) 
        \le P(L_n^c) \\+ 
        P\left( \{W_i\}_{1 \le i \le T_n} \cap \{n - W_{T_n} + X_{T_n} + ... + X_{T_n - i + 1}\}_{1 \le i \le T_n} \cap \left[1, \frac{n}{3}\right] = \emptyset, L_n \right).
    \end{multline}
    Denoting $b = n - W_{T_n}$ and noting that under $L_n$, $b < \frac{n}{M}$, the last term can be written as
    \begin{multline*}
        \sum\limits_{b = 0}^{\frac{n}{M}}P\left( \{W_i\}_{1 \le i \le T_n} \cap \{b + X_{T_n} + ... + X_{T_n - i + 1}\}_{1 \le i \le T_n} \cap \left[1, \frac{n}{3}\right] = \emptyset, 
        \right.\\\left. L_n, n - W_{T_n} = b, X_{T_n + 1} \ge b + 1 \right),
    \end{multline*}
    which, by summing over the possible values $u=T_n$, noting that under $L_n$, $T_n \ge M$, and then dropping the intersection with $L_n$, is bounded above by
    \begin{multline*}
        \sum\limits_{b = 0}^{\frac{n}{M}}\sum\limits_{u = M}^{n}P\left( \{W_i\}_{1 \le i \le u} \cap \{b + X_{u} + ... + X_{u - i + 1}\}_{1 \le i \le u} \cap \left[1, \frac{n}{3}\right] = \emptyset, 
        \right.\\\left. n - W_{u} = b, X_{u + 1} \ge b + 1 \right).
    \end{multline*}
    Noting that $X_{u+1}$ is independent of $X_1, ..., X_u$, the last expression can be written as  
    \begin{multline*}
        \sum\limits_{b = 0}^{\frac{n}{M}}P(X \ge b + 1) \\\times\sum\limits_{u = M}^{n}P\left( \{W_i\}_{1 \le i \le u} \cap \{b + X_{u} + ... + X_{u - i + 1}\}_{1 \le i \le u} \cap \left[1, \frac{n}{3}\right] = \emptyset, n - W_{u} = b\right).
    \end{multline*}
    The inner sum can now be estimated from above by
    \begin{align*}
        \sum\limits_{u = M}^{n}& P\left( \{W_i\}_{1 \le i \le u} \cap \{b + X_{u} + ... + X_{u - i + 1}\}_{1 \le i \le u} \cap \left[1, \frac{n}{3}\right] = \emptyset, T_{n - b} = u \right) 
        \\ &=
        \sum\limits_{u = M}^{n}P\left( \{W_i\}_{1 \le i \le u} \cap \{b + W'_{b , i}\}_{1 \le i \le T_{n - b}} \cap \left[1, \frac{n}{3}\right] = \emptyset, T_{n - b} = u \right) 
        \\ &\leq
        P\left( \{W_i\}_{1 \le i \le T_{n - b}} \cap \{b + W'_{b , i}\}_{1 \le i \le T_{n - b}} \cap \left[1, \frac{n}{3}\right] = \emptyset \right) 
        \\ &\le 
        P\left( \{W_i\}_{1 \le i \le T_{n - b}} \cap \{b + W'_{b , i}\}_{1 \le i \le T_{n - b}} \cap \left[1, \frac{n}{3}\right] = \emptyset, L_n \right) +  P(L_n^c).
    \end{align*}
    Multiplying by $P(X\geq b+1)$, summing over $b$ between $0$ and $\frac nM$ and combining with  \eqref{eq:cond_on_L_n} gives the desired result.
\end{proof}

Let $X_{[a,b]}$ denote the section $X_a,\dots,X_b$ of the sequence $\{X_i\}_i$. For a given vector $\vec{v}=(x_1,x_2,\dots,x_m)$,
let us define $\vec{v}' = (x_1, x_2, ..., \dots, x_{m-1})$, 
$S(\vec{v}) = \sum_{i = 1}^{m}x_i$, and $l(\vec{v}) = m$.

Let $Y_1, Y_2, ...$ be an independent copy of the i.i.d. gap sequence $X_1, X_2, ...$ and denote the sequence of weights resulting from this sequence of gaps by $U_1, U_2, ...$, i.e. $U_n=Y_1+\dots+Y_n$.

\begin{lemma}
\label{lem:pseudo_independence}
Let $K$ be some set of vector pairs $(\vec{v}_1, \vec{v}_2)$ such that $\frac{n}{3} \le S(\vec{v}_1) \le \frac{4n}{9}, S(\vec{v}_1' < \frac{n}{3}), \frac{n}{3} \le S(\vec{v}_2) \le \frac{4n}{9}, S(\vec{v}_2' < \frac{n}{3}) $.
Then, for any fixed nonnegative integer $b \le \frac{n}{M}$, 
\begin{multline*}
P\left(\bigcup\limits_{(\vec{v}_1, \vec{v}_2) \in K}\{ X_{[1, l(\vec{v}_1)]} = \vec{v}_1, X_{[T_{n - b} - l(\vec{v}_2) + 1, T_{n - b}]} = \vec{v}_2\}, L_n\right) 
\\ \le \sum\limits_{(\vec{v}_1, \vec{v}_2) \in K}P(X_{[1, l(\vec{v}_1)]} = \vec{v}_1, Y_{[1, l(\vec{v}_2)]} = \vec{v}_2).
\end{multline*}

\end{lemma}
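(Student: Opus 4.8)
The plan is to first turn the union bound on the left into a sum of per-pair probabilities, and then to prove a clean per-pair decoupling estimate. Writing $l_1=l(\vec v_1)$, $l_2=l(\vec v_2)$, $s_1=S(\vec v_1)$, $s_2=S(\vec v_2)$, and abbreviating $P(\vec v):=P(X_{[1,l(\vec v)]}=\vec v)=\prod_i P(X=v_i)$, the goal I would isolate is
\[
P\bigl(\{X_{[1,l_1]}=\vec v_1\}\cap\{X_{[T_{n-b}-l_2+1,T_{n-b}]}=\vec v_2\},\,L_n\bigr)\le P(\vec v_1)\,P(\vec v_2).
\]
Since $X$ and $Y$ are independent, the right-hand side equals $P(X_{[1,l_1]}=\vec v_1,\,Y_{[1,l_2]}=\vec v_2)$, so summing this over $K$ gives exactly the asserted inequality; the independent copy $Y$ is thus only a device for repackaging $P(\vec v_1)P(\vec v_2)$ as a joint probability. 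The reduction to a sum is justified by disjointness: the constraints $S(\vec v_1')<n/3\le S(\vec v_1)$ force $\vec v_1$ to be \emph{the} initial segment whose partial sums first reach $n/3$, which is uniquely determined by the realization, and likewise $\vec v_2$ is the uniquely determined terminal segment read back from $T_{n-b}$. Hence each outcome meets at most one pair in $K$, so $P\bigl(\bigcup_K\cdots,L_n\bigr)=\sum_K P(\cdots,L_n)$.

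To prove the per-pair estimate I would fix $(\vec v_1,\vec v_2)\in K$ and decompose according to the value $u=T_{n-b}$, using $\{T_{n-b}=u\}=\{W_u\le n-b\}\cap\{W_{u+1}>n-b\}$. The first point is that on $L_n$ the two index blocks $[1,l_1]$ and $[u-l_2+1,u]$ are disjoint: since $s_1\le 4n/9$ we have $W_{l_1}\le 4n/9$, whereas $W_{u+1}>n-b$ together with $X_{u+1}\le n/M$ gives $W_{u-l_2}=W_u-s_2>(n-b-n/M)-4n/9$, which exceeds $4n/9$ once $M$ is large and $b\le n/M$; monotonicity of the weights then forces $l_1<u-l_2$. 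Consequently the terminal block $\{X_{[u-l_2+1,u]}=\vec v_2\}$ is independent of everything indexed by $\{1,\dots,u-l_2\}$ and by $\{u+1\}$. Substituting $W_u=W_{u-l_2}+s_2$ and $W_{u+1}=W_{u-l_2}+s_2+X_{u+1}$, discarding the part of $L_n$ beyond index $u$, and factoring out the independent terminal block, each term is bounded by
\[
P(\vec v_2)\sum_{w\le n-b-s_2}P\bigl(X_{[1,l_1]}=\vec v_1,\,W_{u-l_2}=w\bigr)\,P\bigl(X>n-b-s_2-w\bigr),
\]
where the last factor comes from the fresh, independent gap $X_{u+1}$.

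Now I would sum over $u$ (equivalently over $k=u-l_2\ge l_1$) and exchange the order of summation. The crucial structural observation is that for each fixed value $w$ the weights are strictly increasing, so at most one index $k$ satisfies $W_k=w$; the $k$-sum therefore collapses to $P\bigl(X_{[1,l_1]}=\vec v_1,\,w\in\{W_k\}_{k\ge l_1}\bigr)$. Conditioning on the initial block, the subsequent weights form a fresh renewal process started at $s_1$, so this equals $P(\vec v_1)\,r(w-s_1)$, where $r(j):=P(j\in\mathcal R)$ is the renewal mass function of the gap distribution and $r(0)=1$. With $j=w-s_1\ge 0$ and $N=n-b-s_1-s_2$ the entire bound becomes
\[
P(\vec v_1)\,P(\vec v_2)\sum_{j=0}^{N}r(j)\,P(X>N-j).
\]

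The main obstacle is exactly this final sum. A priori the decomposition over the random index $u=T_{n-b}$ could overcount badly—collapsing the $k$-sum is what prevents a spurious factor of order $n/M$—and even after that one must see that the renewal weights sum to at most $1$. The remedy is the renewal covering identity $\sum_{j=0}^{N}r(j)P(X>N-j)=1$: each summand is the probability that $j$ is the last renewal at or before $N$ (so that the following gap overshoots $N$), these events are disjoint over $j$, and their union has probability $1$ because $0$ is always a renewal and the process tends to infinity. This yields the per-pair estimate, and summing over $K$ completes the proof.
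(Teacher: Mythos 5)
Your proof is correct, and although it shares the outer skeleton of the paper's argument, the key decoupling mechanism is genuinely different. Both proofs turn the union into a sum over pairs (your disjointness justification is right, though a plain union bound would already suffice for the inequality), decompose according to the value $u$ of $T_{n-b}$, and use $L_n$ to force $u>l_1+l_2$, where $l_1=l(\vec v_1)$ and $l_2=l(\vec v_2)$, so that the two gap blocks occupy disjoint index windows. From there the paper argues by exchangeability: the gaps are i.i.d., and both $L_n$ and $\{T_{n-b}=u\}$ are invariant under permutations of $X_1,\dots,X_u$ (they depend on these variables only through $W_u$ and the maximum), so inside each term of the $u$-sum the terminal block $[u-l_2+1,u]$ may be relocated to the fixed window $[l_1+1,l_1+l_2]$; the sum over $u$ then reassembles into the single probability $P(X_{[1,l_1]}=\vec v_1,\,X_{[l_1+1,l_1+l_2]}=\vec v_2,\,L_n)$, after which one drops $L_n$ and uses independence of fixed disjoint blocks. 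You instead keep the terminal block where it is, rewrite $\{T_{n-b}=u\}$ in terms of $W_{u-l_2}$ and $X_{u+1}$, factor across the three independent blocks $[1,u-l_2]$, $[u-l_2+1,u]$, $\{u+1\}$, and control the resulting sum over $u$ by the monotonicity collapse (at most one $k$ has $W_k=w$) followed by the last-renewal covering identity $\sum_{j=0}^{N}r(j)\,P(X>N-j)=1$. Both routes give the per-pair bound $P(\vec v_1)P(\vec v_2)$ with no loss. What each buys: the paper's exchangeability step is shorter and keeps renewal theory entirely out of this lemma (renewal estimates enter the paper only later, via $\mathbb{E}K_{0,b}\le Cb$ in the proof of Theorem \ref{thm: main2Complete}); your route never permutes coordinates---so there is no permutation-invariance of the conditioning events to verify---and it makes explicit why summing over the random location $u$ costs nothing, namely because the terms $r(j)P(X>N-j)$ are probabilities of disjoint events whose union has full measure.
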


\begin{proof}
Note that
    \begin{multline*}
        P\left(\bigcup\limits_{(\vec{v}_1, \vec{v}_2) \in K}\{ X_{[1, l(\vec{v}_1)]} = \vec{v}_1, X_{[T_{n - b} - l(\vec{v}_2) + 1, T_{n - b}]} = \vec{v}_2\}, L_n\right) \\ = \sum\limits_{(\vec{v}_1, \vec{v}_2) \in K}P(X_{[1, l(\vec{v}_1)]} = \vec{v}_1, X_{[T_{n - b} - l(\vec{v}_2) + 1, T_{n - b}]} = \vec{v}_2, L_n).
    \end{multline*}
    Let us consider any pair $(\vec{v}_1, \vec{v}_2) \in K$. Note that under $L_n$ and due to $M$ being sufficiently big, 
    $$W_{T_{n - b}} \ge n - b - \frac{n}{M} \ge n - 2\frac{n}{M} = n\left(1 - \frac{2}{M} \right) > \frac{8n}{9}.$$
    Thus, when $X_{[1, l(\vec{v}_1)]} = \vec{v}_1, X_{[T_{n - b} - l(\vec{v}_2) + 1, T_{n - b}]} = \vec{v}_2$, $T_{n - b} > l(\vec{v}_1) + l(\vec{v}_2)$ since otherwise $$W_{T_{n-b}} = \sum\limits_{i = 1}^{T_{n - b}}X_i \le \sum\limits_{i = 1}^{l(\vec{v}_1)}X_i + \sum\limits_{i = T_{n-b} - l(\vec{v}_2) + 1}^{T_{n - b}}X_i = S(\vec{v}_1) + S(\vec{v_2}) \le \frac{8n}{9},$$ which is a contradiction . Thus,
    \begin{align*}
        P(X_{[1, l(\vec{v}_1)]} = \vec{v}_1, & X_{[T_{n - b} - l(\vec{v}_2) + 1, T_{n - b}]} = \vec{v}_2, L_n) \\&=
        \sum\limits_{u = l(\vec{v}_1)+ l(\vec{v}_2) + 1}^{n}P(X_{[1, l(\vec{v}_1)]} = \vec{v}_1, X_{[u - l(\vec{v}_2) + 1, u]} = \vec{v}_2, L_n, T_{n - b} = u) \\&=
        \sum\limits_{u = l(\vec{v}_1) + l(\vec{v}_2) + 1}^{n}P(X_{[1, l(\vec{v}_1)]} = \vec{v}_1, X_{[l(\vec{v}_1) + 1, l(\vec{v}_1) + l(\vec{v}_2))]} = \vec{v}_2, L_n, T_{n - b} = u) \\&=
        P(X_{[1, l(\vec{v}_1)]} = \vec{v}_1, X_{[l(\vec{v}_1) + 1, l(\vec{v}_1) + l(\vec{v}_2))]} = \vec{v}_2, L_n) 
        \\ &\le P(X_{[1, l(\vec{v}_1)]} = \vec{v}_1, X_{[l(\vec{v}_1) + 1, l(\vec{v}_1) + l(\vec{v}_2))]} = \vec{v}_2) 
        \\ &=  P(X_{[1, l(\vec{v}_1)]} = \vec{v}_1, Y_{[1, l(\vec{v}_2))]} = \vec{v}_2).
    \end{align*}
    Summing over all the pairs in $K$ finishes the proof of the lemma.
\end{proof}

\begin{corollary}
\label{cor: independence_usage}
For each $b \in \left[0, \frac{n}{M}\right]$ we have
\begin{multline*}
    P\left( \{W_i\}_{1 \le i \le T_{n - b}} \cap \{b + W'_{b , i}\}_{1 \le i \le T_{n - b}} \cap \left[1, \frac{n}{3}\right] = \emptyset, L_n \right) \\
    \le P\left( \{W_i\} \cap \{b + U_i\} \cap \left[1, \frac{n}{3}\right] = \emptyset\right). 
\end{multline*}
\end{corollary}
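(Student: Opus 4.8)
The plan is to decompose the empty-intersection event on the left according to the two far-apart blocks of the gap sequence $\{X_i\}$ that it actually depends on, then invoke Lemma \ref{lem:pseudo_independence} to decouple those blocks. Under the decoupling the terminal $X$-block is replaced by an initial block of the independent copy $\{Y_i\}$, and summing over configurations reassembles the right-hand side, in which the backward weights $W'_{b,i}$ have become the forward weights $U_i$ of the independent walk.

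First I would isolate the two relevant blocks. Intersected with $[1,\tfrac n3]$, the left event depends only on the small forward weights and the small shifted backward weights. Let $\vec v_1 = X_{[1,l_1]}$, where $l_1$ is the first index at which the forward weights $W_i$ cross $\tfrac n3$; then the $W_i$ lying in $[1,\tfrac n3]$ are exactly the proper partial sums of $\vec v_1$, and $\vec v_1$ satisfies $S(\vec v_1') < \tfrac n3 \le S(\vec v_1)$. Symmetrically, let $\vec v_2$ be the terminal block $X_{[T_{n-b}-l_2+1,\,T_{n-b}]}$, oriented so that its partial sums reproduce the backward weights $W'_{b,i}$, with $l_2$ the first index at which $W'_{b,i}$ crosses $\tfrac n3$; then the shifted weights $b+W'_{b,i}$ falling in $[1,\tfrac n3]$ are determined by $\vec v_2$ and $b$, and $\vec v_2$ satisfies $S(\vec v_2') < \tfrac n3 \le S(\vec v_2)$. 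On $L_n$ each gap is at most $\tfrac nM$, so the single gap carrying each partial sum across $\tfrac n3$ adds at most $\tfrac nM$, giving $S(\vec v_1),S(\vec v_2) \le \tfrac n3 + \tfrac nM \le \tfrac{4n}9$ for $M$ large. Hence every pair $(\vec v_1,\vec v_2)$ arising on $L_n$ meets the constraints defining the index set in Lemma \ref{lem:pseudo_independence}.

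Next I would let $K$ be the set of all pairs $(\vec v_1,\vec v_2)$ obeying those constraints for which the configuration built from the partial sums of $\vec v_1$ and the $b$-shifted partial sums of $\vec v_2$ has empty intersection inside $[1,\tfrac n3]$. By the previous paragraph,
\begin{multline*}
\Big\{\{W_i\}_{1\le i\le T_{n-b}} \cap \{b+W'_{b,i}\}_{1\le i\le T_{n-b}} \cap [1,\tfrac n3] = \emptyset\Big\} \cap L_n \\ \subseteq \bigcup_{(\vec v_1,\vec v_2)\in K} \{X_{[1,l(\vec v_1)]} = \vec v_1,\ X_{[T_{n-b}-l(\vec v_2)+1,\,T_{n-b}]} = \vec v_2\} \cap L_n,
\end{multline*}
so Lemma \ref{lem:pseudo_independence} applies to the right-hand union and yields
\[
P\big(\{W_i\} \cap \{b+W'_{b,i}\} \cap [1,\tfrac n3] = \emptyset,\ L_n\big) \le \sum_{(\vec v_1,\vec v_2)\in K} P\big(X_{[1,l(\vec v_1)]} = \vec v_1,\ Y_{[1,l(\vec v_2)]} = \vec v_2\big).
\]

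Finally I would read the right-hand sum back as an independent-process probability. For the pair of independent walks $W_i = X_1+\dots+X_i$ and $U_i = Y_1+\dots+Y_i$, each pair in $K$ prescribes the initial $X$-gaps up to the crossing of $\tfrac n3$ and the initial $Y$-gaps up to the crossing of $\tfrac n3$; since $S(\vec v_1') < \tfrac n3 \le S(\vec v_1)$ and likewise for $\vec v_2$, these prescriptions pin down exactly the weights $W_i$ and $U_i$ below $\tfrac n3$, and distinct pairs correspond to disjoint events. As every pair in $K$ was chosen so that $\{W_i\}$ and $\{b+U_i\}$ do not meet in $[1,\tfrac n3]$, the disjoint events on the right all lie inside $\{\{W_i\}\cap\{b+U_i\}\cap[1,\tfrac n3]=\emptyset\}$, so the sum is at most $P(\{W_i\}\cap\{b+U_i\}\cap[1,\tfrac n3]=\emptyset)$; combining the two displays gives the corollary. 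The main obstacle I anticipate is the bookkeeping that the empty-intersection event is genuinely measurable with respect to $\vec v_1,\vec v_2$ alone — in particular handling the orientation reversal inherent in $W'_{b,i}$, the small discrepancy between crossing $\tfrac n3$ and $\tfrac n3 - b$ induced by the shift $b$, and the integer boundary at $\tfrac n3$ — and confirming that the decoupled sum over $K$ reconstitutes exactly the independent-process probability rather than a strictly larger quantity.
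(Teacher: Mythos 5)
Your proposal is correct and takes essentially the same route as the paper: the paper's proof is a one-line invocation of Lemma \ref{lem:pseudo_independence} with $K$ chosen as the set of pairs of crossing blocks whose partial sums produce no coincidence of the form $W_i = b + W'_{b,j}$, which is exactly your construction spelled out in detail. If anything, your version is more careful than the paper's terse phrasing (``no coordinate of $\vec{v}_1$ is greater than some coordinate of $\vec{v}_2$ by exactly $b$''), since you correctly restrict the no-coincidence condition to $\left[1,\frac{n}{3}\right]$ (as needed for the left event to be covered by the union over $K$) and you verify the decomposition, disjointness, and reassembly steps that the paper leaves implicit.
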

\begin{proof}
This follows from Lemma \ref{lem:pseudo_independence}, by taking $ K $ to be the set of all pairs of vectors $ (\vec{v}_1, \vec{v}_2 ) $ such 
that no coordinate of $\vec{v}_1$ is greater than some coordinate of $\vec{v}_2$ by exactly $b$.
\end{proof}

\begin{proof}[Proof of Theorem \ref{thm: main2Complete}]
    For each $n \in \mathbb{N}$, let $A_n$ denote the event that $n$ is not the sum of $2$ distinct weights in our process.
    For a nonnegative integer $b$, let $$K_{0, b} = \min\{i | W_i = b + U_i\}.$$ 
    Using \eqref{eq:w_int_r_empty}, Lemma \ref{lem:estimate_empty_intersection} and Corollary \ref{cor: independence_usage}, we have
    
    \begin{multline*}
        P(A_n)
        \le P\left( \{W_i\}_{1 \le i \le T_n} \cap \{R_i\}_{1 \le i \le T_n} \cap \left[1, \frac{n}{3}\right] = \emptyset \right) \\
        \le \sum\limits_{b = 0}^{\frac{n}{M}}\left(P(X \ge b + 1) P\left( \{W_i\} \cap \{b + U_i\} \cap \left[1, \frac{n}{3}\right] = \emptyset\right) \right) + (\mathbb{E}X + 1) \cdot P(L_n^c)
    \end{multline*}
    Summing over $n$, exchanging the order of summation, and extending the range of $b$ to all non-negative integers, we get
    \begin{align*}
        \sum\limits_{n = 1}^{\infty}P(A_n) &- (\mathbb{E}X + 1) \cdot \sum\limits_{n = 1}^{\infty} P(L_n^c) 
        \\ &\le
        \sum\limits_{b = 0}^{\infty}P(X \ge b + 1)\sum\limits_{n = 1}^{\infty}\left( P\left( \{W_i\} \cap \{b + U_i\} \cap \left[1, \frac{n}{3}\right] = \emptyset\right) \right)
        \\&\le
        \sum\limits_{b = 0}^{\infty}P(X \ge b + 1)\sum\limits_{n = 1}^{\infty}P\left(K_{0,b} > \frac{n}{3} \right).
    \end{align*}
    The inequality $K_{0,b} > \frac{n}{3}$ can be interpreted as the event that two independent renewal processes generated by the same distribution, with one having a delayed start at $b$ do not interstect until after time $n/3$. It follows from the proof of Proposition 2 in \cite{Lind86} that for some constant $C$ and every nonnegative integer $b$, $$\mathbb{E}K_{0, b} \le C \cdot b.$$
    Using this estimate we obtain
    \begin{align*}
        \sum\limits_{b = 0}^{\infty}P(X \ge b + 1)\sum\limits_{n = 1}^{\infty}P\left(K_{0,b} > \frac{n}{3} \right) &\le
        \sum\limits_{b = 0}^{\infty}P(X \ge b + 1)3\mathbb{E}K_{0, b}
         \\&\le
        3C \cdot \sum\limits_{b = 0}^{\infty}b \cdot P(X \ge b + 1)\leq C_1\mathbb{E}X^2,
    \end{align*}
    for some constant $C_1$.
    On the other hand, we have
    \begin{align*}
    \label{Ln^c_sum_convergence}
        \sum_{n=1}^{\infty} P(L_n^c) 
        &= \sum_{n=1}^{\infty} P\left(\max \{X_1, X_2, \dots, X_n\} > \frac{n}{M} \right) \\
        &= \sum_{n=1}^{\infty}{P\left(\bigcup_{k=1}^{n}\left( X_k > \frac{n}{M}\right) \right)} \leq \sum_{n=1}^{\infty} \sum_{k=1}^{n} P \left(X_k > \frac{n}{M} \right) \\
        &= \sum_{n=1}^{\infty} n P(MX > n)\leq \mathbb{E}(MX)^2.
    \end{align*}
    Combining these two estimates, we obtain that $\sum_{n=1}^\infty P(A_n)$ is bounded above by a constant multiple of the second moment of the gaps, so if the second moment is finite, 
    the statement of the theorem follows from the Borel-Cantelli lemma.
\end{proof}

\section{First moment condition for weak k-completeness}
\label{sec:1stmoment}

We start by proving a few general facts about $k-$completeness and its connection to weak and $\leq k$ completeness. We combine these results with Mann's theorem to obtain weak $k$-completeness under a first moment condition, and in the case of $k=2$ extend this from weak $2$-completeness to $2$-completeness.

Given a random sequence of gaps $\bar{X}=\{X_i\}_{i \geq 1}$ let $C^{=k}_n(\bar{X})$ denote the event that the subsequence of gaps $\{X_i\}_{i \geq n}$ generates an asymptotically $k$-complete sequence sequence of weights $\bar{W}$. Similarly, denote by $C^{=k,w}_n(\bar{X})$ the event that the subsequence of gaps $\{X_i\}_{i \geq n}$ generates a weakly asymptotically $k$-complete sequence and by $C^{\leq k, w}_n(\bar{X})$ the event that the subsequence of gaps $\{X_i\}_{i \geq n}$ generates a weakly asymptotically at most $k$-complete sequence. For brevity, we will omit the dependence on $\bar{X}$ from the notation.
 
\begin{lemma}
\label{lem:tail_event}
    Generating an asymptotically $k$-complete sequence is a tail event. The same is true for asymptotically weakly $k$-completeness.
\end{lemma}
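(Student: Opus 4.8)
The plan is to show that the event $C^{=k}_n$ (and likewise $C^{=k,w}_n$) does not depend on $n$, so that it is measurable with respect to the tail $\sigma$-algebra $\bigcap_{m\geq 1}\sigma(X_m, X_{m+1},\dots)$. The key observation is that shifting the starting index of the gap sequence only changes the resulting weight sequence by a global additive shift, which has no effect on asymptotic completeness. Concretely, I would first note that for any fixed $n$, the weight sequence generated by the shifted gaps $\{X_i\}_{i\geq n}$ is exactly the sequence $\{W_m - W_{n-1}\}_{m\geq n}$, i.e. the original weights translated by the constant $W_{n-1}$. Adding a fixed integer to every element of an increasing integer sequence does not alter whether all sufficiently large integers are representable as sums of $k$ of its elements, because such a translation merely shifts the threshold past which representability must hold (each $k$-fold sum shifts by $kW_{n-1}$, a constant). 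Hence the shifted sequence is asymptotically $k$-complete if and only if the original one is.

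Next I would make precise that $C^{=k}_n$, defined using only $\{X_i\}_{i\geq n}$, is an element of $\sigma(X_n, X_{n+1},\dots)$ by construction. The content of the lemma is then that these events coincide for all $n$ up to a null set; more strongly, I would argue they are literally equal as events. The crucial step is the reduction showing $C^{=k}_1 = C^{=k}_n$ for every $n$. Using the translation argument above, the sequence from $\{X_i\}_{i\geq 1}$ is asymptotically $k$-complete if and only if the sequence from $\{X_i\}_{i\geq n}$ is, since the two differ only by deleting the first $n-1$ weights (a finite set, irrelevant to asymptotic statements) and translating by a constant. Therefore $C^{=k}_1 = C^{=k}_n$ as events, which forces $C^{=k}_1 \in \bigcap_{n\geq 1}\sigma(X_n, X_{n+1},\dots)$, the tail $\sigma$-algebra, establishing that generating an asymptotically $k$-complete sequence is a tail event.

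For the weak version I would run the identical argument: dropping the distinctness requirement does not interfere with either the translation invariance or the finiteness of the deleted initial segment, so the same reasoning gives $C^{=k,w}_1 = C^{=k,w}_n$ for all $n$, and hence asymptotic weak $k$-completeness is also a tail event. The only point requiring a little care—and the step I expect to be the main (though still minor) obstacle—is verifying the translation invariance rigorously, in particular that deleting finitely many small weights at the start and simultaneously translating by a constant genuinely preserves the ``all sufficiently large integers'' condition, rather than merely the ``all but finitely many'' condition. This is handled by observing that if every integer beyond some $N$ is a sum of $k$ elements of the original sequence, then every integer beyond $N + kW_{n-1}$ is a sum of $k$ elements of the translated sequence, and conversely, so the asymptotic condition transfers in both directions. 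Once translation invariance is in hand, the tail-event conclusion is immediate from the definition of the tail $\sigma$-algebra.
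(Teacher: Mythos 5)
Your proof has a genuine gap, and it sits exactly at the step you flagged as needing ``a little care'': the inclusion $C^{=k}_1 \subseteq C^{=k}_n$ is false pointwise, so the two events are \emph{not} literally equal. The direction you handle correctly is the easy one: completeness of the tail sequence implies completeness of the full sequence (a superset, up to a harmless translation, of an asymptotically $k$-complete set is still asymptotically $k$-complete). The converse fails because a representation of a large integer in the full sequence may be forced to use one of the deleted initial weights $W_1,\dots,W_{n-1}$, and no translation produces a substitute for it inside the tail. Concretely, take the gap realization $X_1=1$, $X_2=1$, $X_i=2$ for $i\geq 3$. The full weight sequence is $1,2,4,6,8,\dots$, which is asymptotically $2$-complete (odd integers $\geq 5$ are $1+\mathrm{even}$, even integers $\geq 6$ are $2+\mathrm{even}$), while the sequence generated by $\{X_i\}_{i\geq 2}$ is $1,3,5,7,\dots$, all odd, whose pairwise sums are all even; it is not asymptotically $2$-complete, and the same example kills the weak version. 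So $C^{=2}_1 \neq C^{=2}_2$ as sets, and your asserted transfer ``every integer beyond $N+kW_{n-1}$ is a sum of $k$ elements of the translated sequence'' is precisely where the argument breaks: the hypothesis hands you a representation in the full sequence, not one in the tail.

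The paper circumvents this by never claiming the reverse inclusion; the fix is probabilistic, not combinatorial. It uses only the pointwise-true direction (conditionally on $X_1=x_1,\dots,X_m=x_m$, the event $C^{=k}_{m+1}$ implies $C^{=k}_1$), which yields $P(C^{=k}_1 \mid X_1=x_1,\dots,X_m=x_m) \geq P(C^{=k}_{m+1})$, and combines it with the i.i.d.\ shift invariance $P(C^{=k}_{m+1})=P(C^{=k}_1)$. Averaging the conditional probabilities against the law of $(X_1,\dots,X_m)$ then forces every one of them to equal $P(C^{=k}_1)$, i.e.\ $C^{=k}_1$ is independent of each $\sigma(X_1,\dots,X_m)$; equivalently, $C^{=k}_1$ agrees with $C^{=k}_{m+1}$ only up to a null set rather than pointwise, which is all the lemma needs (and all that is true). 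This upgrade of the one-sided inclusion via equality of probabilities is the idea missing from your write-up; translation invariance alone cannot deliver tail measurability.
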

    
\begin{proof}
    The proofs are identical, so we only present the argument for the first case.
    Since the sequence $\bar{X}$ is an i.i.d. sequence, we have $P(C^{=k}_1) = P(C^{=k}_{m+1})$. If we fix the gaps $ X_1 = x_1, X_2 = x_2, \dots, X_m = x_m$, then all the weights $W_n$ for $n\geq m$ are the same as the weights generated by the sequence $\{X_i\}_{i \geq m+1}$ but shifted by the constant $\sum_{i=1}^m x_i$, so if the latter is asymptotically $k$-complete, so is the original sequence $\{W_n\}_{n\geq 1}$, implying 
    $$P(C^{=k}_1 \mid X_1 = x_1, X_2 = x_2, \dots, X_m = x_m) \geq P(C^{=k}_{m+1}).$$ Conditioning on the first $m$ gaps we thus obtain
    \begin{align*}
        P(C^{=k}_1) &= \sum_{ \{ x_1,\dots, x_m \} \subseteq \supp X} 
        P(C^{=k}_1 \mid X_1 = x_1, \dots, X_m = x_m) P(X_1 = x_1, \dots, X_m = x_m) \\
        & \geq P(C^{=k}_{m+1}) = P(C^{=k}_1),
    \end{align*}
    where $\supp X$ refers to the support of the gap distribution.
    Equality holds if  
    $$P(C^{=k}_1 \mid X_1 = x_1, X_2 = x_2, \dots, X_m = x_m) = P(C^{=k}_1),$$  
    which implies $ C^{=k}_1 $ is a tail event.
\end{proof}

Note that the same argument does not apply for  $\leq k$-completeness as it's not clear why the inequalities $$P(C^{\leq k}_1 \mid X_1 = x_1, X_2 = x_2, \dots, X_m = x_m) \geq P(C^{\leq k}_{m+1})$$ $$P(C^{\leq k,w}_1 \mid X_1 = x_1, X_2 = x_2, \dots, X_m = x_m) \geq P(C^{\leq k,w}_{m+1})$$ hold. However we shall prove next that $C^{=k,w}_1$ and $C^{\leq k,w}_1$ have the same probability.

\begin{lemma}
\label{lem:nonprob_leq_equiv_equal}
    For a sequence $s$ if $\gcd(\{s_i\}_{i \geq 0}) = \gcd(\{s_i - s_0\}_{i \geq 1})$ then: 
    $\{s_i\}_{i \geq 0}$ is weakly asymptotically $k$ complete if and only if $\{s_i - s_0\}_{i \geq 1}$ is weakly asymptotically $\leq k$ complete.
\end{lemma}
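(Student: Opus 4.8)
The plan is to reduce both completeness notions to a single statement about a common shifted sumset, exploiting that prepending the value $s_0$ and demanding exactly $k$ summands is the same as dropping that value and allowing at most $k$ summands. Write $t_i = s_i - s_0$, so that $t_0 = 0$ and the right-hand sequence is $\{t_i\}_{i \geq 1}$. The engine of the proof is the elementary identity
\[
\sum_{j=1}^k s_{i_j} = k s_0 + \sum_{j=1}^k t_{i_j},
\]
valid for any indices $i_1,\dots,i_k \geq 0$. Since $t_0 = 0$, a sum of exactly $k$ terms from $\{t_i\}_{i \geq 0}$ becomes, after deleting the zero summands, a sum of at most $k$ terms from $\{t_i\}_{i \geq 1}$; conversely any sum of at most $k$ terms from $\{t_i\}_{i \geq 1}$ is padded up to exactly $k$ terms by appending copies of $t_0 = 0$. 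Thus a positive integer is a sum of exactly $k$ elements of $\{t_i\}_{i \geq 0}$ if and only if it is a sum of at most $k$ elements of $\{t_i\}_{i \geq 1}$.

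For the forward direction I would assume $\{s_i\}_{i \geq 0}$ is weakly asymptotically $k$-complete and take an arbitrary large integer $M$. Applying $k$-completeness to $N := M + k s_0$ (which is large once $M$ is) yields $N = \sum_{j=1}^k s_{i_j}$; subtracting $k s_0$ and discarding the summands equal to $t_0 = 0$ expresses $M$ as a sum of at most $k$ elements of $\{t_i\}_{i \geq 1}$. For the reverse direction, assume $\{t_i\}_{i \geq 1}$ is weakly asymptotically $\leq k$-complete and take a large integer $N$. Applying $\leq k$-completeness to $M := N - k s_0$ gives $M = t_{i_1} + \dots + t_{i_m}$ with $m \leq k$ and $i_j \geq 1$; padding with $k-m$ copies of $s_0$ produces $N = \sum_{j=1}^m s_{i_j} + (k-m) s_0$, a representation of $N$ as a sum of exactly $k$ elements of $\{s_i\}_{i \geq 0}$. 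In both directions the only passage between the two quantifiers is a shift by the fixed constant $k s_0$, which carries ``every sufficiently large integer'' to ``every sufficiently large integer.''

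The step deserving genuine care is the divisibility bookkeeping, and this is where the hypothesis $\gcd(\{s_i\}_{i \geq 0}) = \gcd(\{s_i - s_0\}_{i \geq 1})$ is used. Writing $d = \gcd(\{s_i - s_0\}_{i \geq 1})$, every element of $\{s_i\}_{i \geq 0}$ is congruent to $s_0$ modulo $d$, so every sum of exactly $k$ of them lies in the single class $k s_0 \bmod d$, whereas every element of $\{t_i\}_{i \geq 1}$ is a multiple of $d$; hence when $d > 1$ neither sequence can be complete in its respective sense and the equivalence holds with both sides false. The hypothesis forces $\gcd\{s_i\} = \gcd(s_0, d) = d$, that is $d \mid s_0$, so that the constant offset $k s_0$ is itself a multiple of $d$ and the shift-and-pad correspondence of the first paragraph never moves integers between residue classes modulo $d$; in the remaining case $d = 1$ this is automatic and the correspondence settles the equivalence outright. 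I expect this reconciliation of the two divisibility obstructions, rather than the padding argument itself, to be the part requiring the most attention.
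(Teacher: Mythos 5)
Your proof is correct and follows essentially the same shift-and-pad argument as the paper: subtract $ks_0$ and drop the zero summands $t_0 = 0$ in one direction, and pad with copies of $s_0$ in the other, with the fixed shift by $ks_0$ carrying ``all sufficiently large integers'' to ``all sufficiently large integers.'' One minor observation: neither the paper's proof nor the core of yours actually needs the $\gcd$ hypothesis, so your final paragraph's divisibility case analysis ($d>1$ versus $d=1$) is harmless but superfluous.
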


\begin{proof}
    If $\{s_i\}_{i \geq 0}$ is weakly asymptotically $k$-complete, then for any $n$ large enough, there are indices $i_1,\dots,i_k$ such that $n=s_{i_1}+\dots+s_{i_k}$. Then $n-ks_0=(s_{i_1}-s_0)+\dots+(s_{i_k}-s_0)$. If any $i_j$'s are $0$, then the corresponding summands will be zero, so we get that $\{s_i - s_0\}_{i \geq 1}$ is weakly asymptotically $\leq k$-complete. On the other hand, if $\{s_i - s_0\}_{i \geq 1}$ is weakly asymptotically $\leq k$-complete, then any big enough number can be expressed as sum of $t \leq k$ elements from $\{s_i - s_0\}_{i \geq 1}$ and thus any big enough number can be expressed as the sum of $k-t$ $s_0$'s and $t$ elements from $\{s_i\}_{i \geq 1}$, meaning $\{s_i\}_{i \geq 0}$ is weakly asymptotically $k$-complete.
\end{proof}

\begin{lemma}
\label{lem:C=D}
    Given any two integers $n,m\in\mathbb{N}$ we have
    $$P(C^{=k,w}_m) = P(C^{\leq k,w}_n).$$
\end{lemma}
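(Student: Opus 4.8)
The plan is to reduce the claim to the single identity $P(C^{=k,w}_1)=P(C^{\leq k,w}_2)$ and then use the i.i.d.\ structure to strip away the dependence on the indices $m$ and $n$. Because $\bar X$ is i.i.d., the shifted subsequence $\{X_i\}_{i\geq m}$ has the same law as $\{X_i\}_{i\geq 1}$ for every $m$, so $P(C^{=k,w}_m)=P(C^{=k,w}_1)$ for all $m$, and likewise $P(C^{\leq k,w}_n)=P(C^{\leq k,w}_1)=P(C^{\leq k,w}_2)$ for all $n$. It therefore suffices to show that these two common values agree, and I will do so by proving $P(C^{=k,w}_1)=P(C^{\leq k,w}_2)$.

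The bridge between the $k$-complete and the $\leq k$-complete notions is Lemma \ref{lem:nonprob_leq_equiv_equal}, applied pathwise. Setting $s_i=W_{i+1}$ for $i\geq 0$, the sequence $\{s_i\}_{i\geq 0}$ is exactly the weight sequence $\{W_n\}_{n\geq 1}$ generated by $\{X_i\}_{i\geq 1}$, while $\{s_i-s_0\}_{i\geq 1}=\{W_n-W_1\}_{n\geq 2}$ is precisely the weight sequence generated by the shifted gaps $\{X_i\}_{i\geq 2}$, since $W_n-W_1=X_2+\dots+X_n$. Hence, on any realization for which the hypothesis $\gcd(\{s_i\}_{i\geq 0})=\gcd(\{s_i-s_0\}_{i\geq 1})$ of Lemma \ref{lem:nonprob_leq_equiv_equal} holds, the event $C^{=k,w}_1$ (the first sequence is weakly asymptotically $k$-complete) coincides with the event $C^{\leq k,w}_2$ (the second sequence is weakly asymptotically $\leq k$-complete). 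Once this coincidence is known to hold off a null set, the two events have equal probability, and combining with the index reduction above yields $P(C^{=k,w}_m)=P(C^{\leq k,w}_n)$ for all $m,n$.

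The main obstacle, and the only genuinely probabilistic point, is verifying that the gcd hypothesis of Lemma \ref{lem:nonprob_leq_equiv_equal} holds almost surely. Here I would invoke the standing assumption $\gcd(\supp X)=1$: a telescoping argument (the successive differences of consecutive weights are the gaps) shows $\gcd(\{W_n\}_{n\geq 1})=\gcd(\{X_i\}_{i\geq 1})$ and $\gcd(\{W_n-W_1\}_{n\geq 2})=\gcd(\{X_i\}_{i\geq 2})$, so it is enough to prove that the gcd of an infinite i.i.d.\ sample from a distribution with $\gcd(\supp X)=1$ equals $1$ almost surely. Choosing finitely many values $a_1,\dots,a_r\in\supp X$ with $\gcd(a_1,\dots,a_r)=1$, each $a_j$ has positive probability and so appears in $\{X_i\}_{i\geq 1}$, and also in $\{X_i\}_{i\geq 2}$, almost surely; this forces both gcds to divide $1$, hence both equal $1$ almost surely. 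Consequently the hypothesis of Lemma \ref{lem:nonprob_leq_equiv_equal} is satisfied off a null set, the coincidence $C^{=k,w}_1=C^{\leq k,w}_2$ holds up to a null set, and the desired equality of probabilities follows.
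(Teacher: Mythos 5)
Your proof is correct and follows essentially the same route as the paper: reduce to fixed indices using that the shifted gap sequences are identically distributed, and then bridge exact-$k$ and $\leq k$ weak completeness via Lemma \ref{lem:nonprob_leq_equiv_equal} applied with $s_0$ equal to the first weight. The only differences are cosmetic: the paper uses the trivial inclusion $C^{=k,w}_n \subseteq C^{\leq k,w}_n$ for one inequality and the lemma only for the reverse one, whereas you invoke both directions of the lemma at once for the pair $(1,2)$ and, unlike the paper, explicitly verify its gcd hypothesis (which indeed holds almost surely by your Borel--Cantelli argument).
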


\begin{proof}
    Since the sequences $\{X_i\}_{i\geq n}$ and $\{X_i\}_{i\geq m}$ have the same distribution, and being asymptotically $k$-complete is stronger than being asymptotically $\leq k$-complete, we have $$P(C^{=k,w}_m) = P(C^{=k,w}_n) \leq P(C^{\leq k,w}_n).$$
    
    For $n \geq 2$, if $C^{\leq k,w}_n$ is true then 
    $\{\sum_{i=1}^{t}{X_i} - \sum_{i=1}^{n-1}{X_i}\}_{t \geq n}$ is weakly asymptotically $\leq k$-complete, which by Lemma \ref{lem:nonprob_leq_equiv_equal} implies $\{\sum_{i=1}^{n-1}{X_i}, \{\sum_{i=1}^{t}{X_i} \}_{t \geq n} \}$ is weakly asymptotically $k$-complete, so $P(C^{\leq k,w}_n) \leq P(C^{=k,w}_{n-1}) = P(C^{=k,w}_m)$
\end{proof}

From now on we will drop the subscript from the notation $C^{\leq k,w}$.

For a sequence $s$ denote by $\sigma(s)$ and $d(s)$ respectively its Schnirelmann density and asymptotic density. Namely,
$$\sigma(s) = \inf_{n}{\frac{A(n)}{n}},$$
and
$$d(s) = \lim_{n \rightarrow \infty}{\frac{A(n)}{n}} \quad \text{if it exists},$$
where $A(n) = |\{i: s_i \leq n\}|$.

Recall Mann's theorem, relating sumsets to Schnirelmann densities:
\begin{theorem}[Mann's Theorem \cite{Mann42}]
\label{thm:Mann}
    For subsets $A$ and $B$ of $\mathbb{N}$, if the sumset $A + B \neq \mathbb{N}$, then $$\sigma(A+B) \geq \sigma(A) + \sigma(B) \quad \text{holds.}$$
\end{theorem}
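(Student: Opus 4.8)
The plan is to derive Mann's theorem from the Dyson $e$-transform combined with an induction that collapses the second summand to a single point. First I would normalize the setup: after translating each set so that its least element sits at $0$ (which changes neither $\sigma(A)$, $\sigma(B)$, nor the validity of the conclusion), I may assume $0 \in A \cap B$, the standard hypothesis under which Schnirelmann density is well behaved. Writing $S(n) = |S \cap [1,n]|$, the hypothesis $A + B \neq \mathbb{N}$ ensures, via Schnirelmann's elementary pigeonhole lemma (otherwise $A+B=\mathbb{N}$), that $\sigma(A) + \sigma(B) \leq 1$, so the target inequality is meaningful; and by the definition of $\sigma$ as an infimum it reduces to the pointwise bound $(A+B)(n) \geq (\sigma(A) + \sigma(B))\, n$ for every $n \geq 1$.

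Next I would set up the transform. For an element $e \in A$, put $A_e = A \cup (B + e)$ and $B_e = B \cap (A - e)$. Two facts, both verified by elementary set manipulation, drive everything. First, $A_e + B_e \subseteq A + B$: if $x \in A$ then $x + y$ with $y \in B_e \subseteq B$ already lies in $A+B$, while if $x = b + e$ with $b \in B$ then $x + y = b + (y+e)$ with $y + e \in A$, again landing in $A+B$. Second, $|A_e| + |B_e| = |A| + |B|$, because the overlap $A \cap (B+e)$ removed when forming the union equals $B_e + e$, exactly compensating the intersection that defines $B_e$. Finally $0 \in A \subseteq A_e$, and since $e \in A$ we also get $0 \in B_e$, so the normalization $0 \in A_e \cap B_e$ is preserved and the transform never enlarges the sumset.

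I would then induct on $|B \cap [1,n]|$. When $B$ has no positive element $\leq n$, then $\sigma(B) = 0$ and the claim reduces to $(A+B)(n) \geq \sigma(A)\, n$, which holds because $A \subseteq A+B$; this is the base case. Otherwise a suitable choice of $e \in A$ makes $B_e$ a proper subset of $B$, strictly decreasing $|B \cap [1,n]|$ while, by the two facts above, keeping the sumset no larger and the combined cardinality fixed. Iterating and unwinding the inclusions $A_e + B_e \subseteq A + B$ along the resulting chain transfers the terminal bound back to a lower bound for the original $(A+B)(n)$; taking the infimum over $n$ then yields $\sigma(A+B) \geq \sigma(A) + \sigma(B)$.

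The hard part is promoting the cardinality identity $|A_e| + |B_e| = |A| + |B|$ to a statement about initial segments, since $\sigma$ is an \emph{infimum} of the ratios $S(n)/n$ rather than a single total count. Concretely, the chain above gives $(A+B)(n) \geq A^{\mathrm{final}}(n)$, and closing the argument requires $A^{\mathrm{final}}(n) \geq A(n) + B(n)$, i.e.\ that the elements stripped from $B$ reappear inside $A$ \emph{within} $[1,n]$. One must therefore show that the parameter $e$ can always be chosen so that the transform simultaneously shrinks $B$, terminates the induction, and does not deplete the counting functions of $A_e$ and $B_e$ on the critical initial segment. This initial-segment bookkeeping — controlling the ratios, not merely the totals — is exactly what separates Mann's sharp $\alpha+\beta$ inequality from the elementary but lossy Schnirelmann superadditivity, and it is where essentially all of the difficulty lies.
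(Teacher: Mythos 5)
The paper does not prove this statement at all: Mann's theorem is imported as a classical black box, cited to Mann's 1942 paper, and is then applied in the proof of Theorem \ref{thm:main_1st_moment_k_complete}. So your proposal has to be judged as a from-scratch proof of a deep classical result rather than against any argument in the paper. Your chosen route, the Dyson $e$-transform, is indeed the standard modern path to Mann's theorem, and the elementary facts you verify are correct: $A_e + B_e \subseteq A + B$, the set identity $A \cap (B+e) = B_e + e$ (hence conservation of total cardinality), and the preservation of $0 \in A_e \cap B_e$.

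However, there is a genuine gap, and you name it yourself in the final paragraph: the part you defer \emph{is} the theorem. The identity $A \cap (B+e) = B_e + e$ translates into a counting-function identity only with a shift, namely $A_e(n) + B_e(n-e) = A(n) + B(n-e)$; the elements transferred from $B$ into $A_e$ reappear translated by $e$, so they can be pushed beyond the segment $[1,n]$ and lost to the count $A_e(n)$. Consequently your inductive step is unsupported in two places. First, the terminal bound $A^{\mathrm{final}}(n) \geq A(n) + B(n)$ does not follow from the two facts you establish. Second, and equivalently, the induction on $|B \cap [1,n]|$ would need the transform to satisfy $\sigma(A_e) + \sigma(B_e) \geq \sigma(A) + \sigma(B)$ (or its pointwise analogue at every $m \leq n$), and the cardinality conservation simply does not imply this -- it is false for careless choices of $e$. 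Fixing this is not a routine refinement: the known proofs (Mann's original, Artin--Scherk, Dyson's) all run a much more delicate induction, e.g., Dyson's argument works with a minimal counterexample $n$, performs an ordered cascade of transforms, and tracks a discrepancy function on every initial segment simultaneously. As written, your argument establishes only the trivial bound $(A+B)(n) \geq A(n) \geq \sigma(A)\,n$, and the passage from there to the sharp $\sigma(A) + \sigma(B)$ bound is precisely the content of Mann's theorem; if you intend to include a proof rather than a citation, that bookkeeping must be carried out in full.
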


We now use Mann's theorem together with the results above to establish Theorem \ref{thm:main_1st_moment_k_complete}.

\begin{proof}[Proof of Theorem \ref{thm:main_1st_moment_k_complete}]
    By Lemma \ref{lem:C=D} it is enough to prove $P(C^{\leq k,w}) = 1$. By Lemma \ref{lem:tail_event} $C^{k,w}$ is a tail event, and by Lemma \ref{lem:C=D} we have $P(C^{\leq k,w})=P(C^{k,w})$, so it is enough to prove that $P(C^{\leq k,w}) > 0$. From $\mathbb{E}X < k$ if follows that $d(\{W_n\}_{n \in \mathbf{N}}) > \frac{1}{k}$.
    Introduce the random variable $N$ by
    $$N = \min\left\{n:\forall m \geq n, \quad \frac{A(m)}{m} > \frac{1}{k}\right\}.$$
    Since $\sum_{n=1}^{\infty}{P(N=n)} = 1$, it follows that $\exists n_0$ such that $P(N=n_0)>0$.

    We now distinguish two cases.

    \textit{Case 1: $1\in\supp{X}$.}

    We have $$P \left( N=n_0 \text{ for the sequence generated by }\{X_i\}_{i> n_0}\right) > 0,$$
    and since $1\in\supp{X}$, we have
    $$P(X_1 = X_2 = \dots =X_{n_0} = 1) > 0.$$
    Since these two events deal with distinct parts of the i.i.d. sequence $\{X_i\}_{i> n_0}$, with positive probability both events occur simultaneously. Now, let us evaluate the Schnirelmann density $\sigma(\{W_i\}_{i \geq 1})$ for such sequences of gaps. We have    
    $$ \frac{A(n)}{n} \geq
        \begin{cases}
            1 & \text{if } n \leq n_0 \\
            \frac{1}{2} & \text{if } n_0 < n \leq 2n_0 \\
            \frac{1}{k} & \text{if } n > 2n_0
        \end{cases},
    $$
    where the first two inequalities follow from the choice of the first $n_0$ gaps and the last one from the condition $N=n_0$.

    Thus we get $P \left( \sigma(W) \geq \frac{1}{k} \right)>0$ and Mann's theorem implies asymptotic $\leq k$-comp-leteness with the same positive probability.

    \textit{Case 2: $1\notin\supp{X}$.}

    If $\supp{X}$ is finite, than $X$ has all moments, so asymptotic completeness follows from Theorem \ref{thm: main2Complete}, so we can assume $\supp{X}$ is infinite. Define $$x'= \min\{x \in \supp\{X\} : x-1\in Mon\{\supp\{X\}\}\},$$ where $Mon\{A\}$ is the monoid generated by $A$. Note, that $x'$ exists since $\gcd\{\supp{X}\}=1$. 
    
    Define $x_0 = \min\{x\in \supp\{X\}\}$ and note that $\mathbb{E}X<k$ implies $x_0 \leq k-1$.

    Similarly to the previous case, we have 
    $$P(X_1 = x', \quad X_2 = X_3=\dots=X_{n_0+2} = x_0) > 0$$
    and
    $$P \left( N=n_0 \text{ for the sequence generated by $\{X_i\}_{i> n_0+2}$} \right) > 0,$$
    and, as before, these two events deal with distinct parts of the i.i.d. sequence $\{X_i\}_{i> n_0}$, so with positive probability both events occur simultaneously. Let us evaluate the Schnirelmann density $\sigma(\{W_i - (x' - 1)\} )$ on this event. We have
    $$ \frac{A(n)}{n} \geq
        \begin{cases}
            1 & \text{if } n = 1 \\
            \frac{1}{x_0} > \frac{1}{k} & \text{if } 1 < n \leq (n_0+1)x_0 \\
            \frac{1}{x_0+1} \geq \frac{1}{k} & \text{if } (n_0+1)x_0 < n \leq (n_0+1)(x_0+1) \\
            \frac{1}{k} & \text{if } n > (n_0+1)(x_0+1)
        \end{cases}.
    $$
    Here the first three inequalities hold because of the choice of $X_1, \dots X_{n_0+2}$, and the last one because $N=n_0$ for the sequence generated by $\{X_i\}_{i> n_0+2}$.

    Thus, using Mann's theorem, we get that 
    $\{W_i - (x' - 1)\}_{i \geq1}$ is weakly asymptotically $\leq k$-complete. Now, Lemma \ref{lem:nonprob_leq_equiv_equal} implies that $\{(x'-1), \{W_i\}_{i\geq 1}\}$ is weakly asymptotically $k$-complete. By the choice of $x'$ with nonzero probability there are $X_{-m}, \dots X_{0}$ such that their sum equals $x' - 1$ and hence the sequence generated by $\{X_i\}_{i \geq -m}$ is asymptotically  $k$ complete with positive probability.
\end{proof}

We now fix $k = 2$ and prove that we can go from weak $2$-completeness to $2$-completeness.


\begin{proof}[Proof of Corollary \ref{cor:main_1st_moment_2_complete}]
    Using Theorem \ref{thm:main_1st_moment_k_complete} it is enough to show that if $\mathbb{E}X<\infty$, then $P(C^{=2,w}_1) = P(C^{=2}_1)$. Note that $$P(C^{=2,w}_1, \neg C^{=2}_1) \leq P(\forall i \neq j, \quad 2W_n \neq W_i + W_j \quad n-i.o.),$$ so it is enough to show that for large enough $N$, $\forall n>N$ we have $2W_n \in W + W$ with  probability $1$. To evaluate $P(\forall i \neq j, \quad 2W_n \neq W_i + W_j \quad n-i.o.)$,
    let us compute
    $$\sum_{n=1}^\infty{P(\forall i \neq j, \quad 2W_n \neq W_i + W_j)} = \sum_{n=1}^\infty{P(\forall i < j : W_n - W_i \neq W_j - W_n)}.$$
    Here $\{W_n - W_i\}_{i<n}$ and $\{W_j - W_n\}_{j>n}$ are two independent random walks, while $P(\forall i < j : W_n - W_i \neq W_j - W_n)$ calculates the probability that $I\geq n$, where $I$ is a random variable which is the index of the intersection point of those two random walks. From Blackwell's theorem (\cite{FellerVol1}) $\mathbb{E}I<\infty$, so
    $$\sum_{n=1}^\infty{P(\forall i < j : W_n - W_i \neq W_j - W_n)} = \sum_{n=1}^\infty{P(I \geq n)} = \mathbb{E}I < \infty.$$ Thus $P(\forall i \neq j \quad  2W_n \neq W_i + W_j \quad n-i.o.) = 0$ implying $P(C^{=2,w}_1) = P(C^{=2}_1)$, as needed.
\end{proof}



    
    


\bibliographystyle{alpha}
\bibliography{randomgaps}

\end{document}